\documentclass[12pt,reqno]{amsart}
\usepackage{fullpage}
\usepackage{times}
\usepackage{amsmath,amssymb,amsthm,url}
\usepackage[utf8]{inputenc}
\usepackage[english]{babel}
\usepackage{bbm}
\usepackage{enumerate}
\usepackage{bm}
\usepackage{graphicx}
\usepackage{mathrsfs}
\usepackage[colorlinks=true, pdfstartview=FitH, linkcolor=blue, citecolor=blue, urlcolor=blue]{hyperref}
\usepackage{tikz-cd}
\usepackage{tabstackengine}
\stackMath
\usepackage{comment}
\usepackage[capitalise]{cleveref}

\newtheorem{thm}{Theorem}[section]

\newtheorem{prop}[thm]{Proposition}
\newtheorem{cor}[thm]{Corollary}

\newtheorem{claim}[thm]{Claim}

\theoremstyle{definition}

\newtheorem{rem}[thm]{Remark}

\newenvironment{poc}{\begin{proof}[Proof of claim]}{\end{proof}}

\newcommand{\F}{\mathbb{F}}

\newcommand{\Cay}{\operatorname{Cay}}

\title{Van Lint--MacWilliams' conjecture and maximum cliques in Cayley graphs over finite fields, II}
\author{Chi Hoi Yip}
\address{School of Mathematics\\ Georgia Institute of Technology\\ Atlanta, GA 30332\\ United States}
\email{cyip30@gatech.edu}
\subjclass[2020]{11B30, 11T30, 51E15}
\keywords{Cayley graph, Paley graph, maximum clique, subfield}
\begin{document}

\begin{abstract}
The well-known Van Lint--MacWilliams' conjecture states that if $q$ is an odd prime power, and $A\subseteq \F_{q^2}$ such that $0,1 \in A$, $|A|=q$, and $a-b$ is a square for each $a,b \in A$, then $A$ must be the subfield $\F_q$. This conjecture was first proved by Blokhuis and is often phrased in terms of the maximum cliques in Paley graphs of square order. Previously, Asgarli and the author extended Blokhuis' theorem to a larger family of Cayley graphs. In this paper, we give a new simple proof of Blokhuis' theorem and its extensions. More generally, we show that if $S \subseteq \F_{q^2}^*$ has small multiplicative doubling, and $A\subseteq \F_{q^2}$ with $0,1 \in A$, $|A|=q$, such that $A-A \subseteq S \cup \{0\}$, then $A=\F_q$. This new result refines and extends several previous works; moreover, our new approach avoids using heavy machinery from number theory.
\end{abstract} 

\maketitle

\section{Introduction}
Throughout the paper, $p$ denotes a prime, and $q,r$ denote positive powers of $p$. We use $\F_r$ to denote the finite field with $r$ elements. For a finite field $\F_r$, let $\F_r^*=\F_r\setminus \{0\}$ be its multiplicative group. A {\em clique} in a graph $X$ is a subset of vertices of $X$ that is pairwise adjacent, and a {\em maximum clique} is a clique with the maximum size. For a graph $X$, the {\em clique number} of $X$, denoted $\omega (X)$, is the size of a maximum clique. 

In 1978, van Lint and MacWilliams \cite{vLM78} studied the vectors of minimum weight in certain generalized quadratic residue codes. They proposed the following conjecture: if $q$ is an odd prime power, $A$ is a subset of $\F_{q^2}$ such that $0,1 \in A$, $|A|=q$, and $a-b$ is a square in $\F_{q^2}$ for each $a,b \in A$, then $A$ is necessarily the subfield $\F_q$. This conjecture was first confirmed by Blokhuis \cite{Blo84} in 1984. Blokhuis' theorem is often formulated in terms of the characterization of maximum cliques in Paley graphs of square order. Next, we introduce some standard terminologies related to Cayley graphs and generalized Paley graphs. 

Let $G$ be an abelian group and $S$ be a subset of $G\setminus \{0\}$ with $S=-S$, the \emph{Cayley graph} $X=\Cay(G;S)$ is the graph with vertex set $G$, such that two vertices are adjacent if and only if their difference is in the \emph{connection set} $S$. Note that $A\subseteq G$ is a clique in $\Cay(G;S)$ if and only if $A-A \subseteq S \cup \{0\}$. Generalized Paley graphs are Cayley graphs defined on the additive group of a finite field $\F_r$, where $S$ is a (multiplicative) subgroup of $\F_{r}^*$. More precisely, let $d>1$ be an integer and $r \equiv 1 \pmod {2d}$. The {\em $d$-Paley graph} on $\F_r$, denoted $GP(r,d)$, is the Cayley graph $\operatorname{Cay}(\F_r;(\F_r^*)^d)$, where $(\F_r^*)^d=\{x^d: x \in \F_r^*\}$ is the set of $d$-th powers in $\F_r^*$. The classical Paley graphs are $2$-Paley graphs. Note that the assumption $r \equiv 1 \pmod {2d}$ ensures that $-1\in (\F_r^*)^d$, so the graph $GP(r,d)$ is well-defined. While generalized Paley graphs are often only defined over $\F_r$ with $r$ odd, one can also extend the definition to $\F_r$ with $r$ even. When $r$ is even, any subset $S\subseteq \F_r^*$ satisfies $S=-S$, so whenever $d>1$ with $d\mid (r-1)$, we can define $GP(r,d)=\operatorname{Cay}(\F_r;(\F_r^*)^d)$. For a generalized Paley graph $GP(r,d)$, the trivial upper bound on its clique number is $\sqrt{r}$ \cite[Theorem 1.1]{Y22}. Even improving this trivial upper bound by $1$ requires nontrivial efforts. It is a major open question in arithmetic combinatorics (as well as in analytic number theory and extremal combinatorics) to estimate the clique number of generalized Paley graphs; we refer to recent progress in \cite{DSW, HP, KYY24b, Y22, Yip1, Y25+}. 

Using the above terminology, Blokhuis' theorem states that if $q$ is an odd prime power, then the trivial upper bound on the clique number of the Paley graph $GP(q^2,2)$ is tight; moreover, the only maximum clique in $GP(q^2,2)$ containing $0,1$ is $\F_q$. Blokhuis' proof is based on an ingenious polynomial method. From his result, it is easy to deduce that all maximum cliques in $GP(q^2,2)$ are of the form $a\F_q+b$, where $a$ is a nonzero square in $\F_{q^2}$ and $b\in \F_{q^2}$: to show that $a\F_q+b$ is a clique, it suffices to check that $a\F_q$ is contained in the set of squares in $\F_{q^2}$. Since maximum cliques arising from the subfield $\F_q$ are the only obvious maximum cliques in $GP(q^2,2)$, Blokhuis' theorem is also known as an analogue of the Erd\H{o}s--Ko--Rado (EKR) theorem for Paley graphs of square order; we refer to \cite[Section 5.9]{GM15} and \cite{AY22, GY24, NM, Y22, Y24} for further discussions. 

Later, in 1991, Bruen and Fisher \cite{BF91} gave a different proof of Blokhuis' theorem using the so-called ``Jamison method". In 1999, Sziklai \cite{Szi99} extended Blokhuis's theorem to generalized Paley graphs $GP(q^2,d)$ with $d \mid (q+1)$: the only maximum clique containing $0,1$ is $\F_q$. The methods used in \cite{Blo84, BF91, Szi99} are algebraic in nature and highly rely on the fact that the connection set of generalized Paley graphs $GP(q^2,d)$ with $d \mid (q+1)$ are subgroups of $\F_{q^2}^*$ and can be written as a union of $\F_q^*$-cosets in $\F_{q^2}^*$. As pointed out in \cite[Chapters 3 and 8]{NM} and \cite[Section 2]{AY22}, these proof techniques cannot be extended to general Cayley graphs. 

In the previous paper \cite{AY22} in 2022, Asgarli and the author presented a new approach and extended Blokhuis' theorem to a family of Cayley graphs $\Cay(\F_{q^2}; S)$, where $\F_q^* \subseteq S$ and $S$ is a union of at most $\frac{q+1}{2}$ many $\F_q^*$-cosets in $\F_{q^2}^*$. The proof of the main result \cite[Theorem 1.3]{AY22} relied on a recent result on character sum estimates over affine spaces due to Reis \cite{Reis}, which crucially relied on a deep result by Katz \cite{Katz} that extended the celebrated Weil bound on complete character sums over finite fields \cite{W48} to character sums over affine lines. Due to the analytic nature of the proof, this result requires the following two technical assumptions:
\begin{enumerate}
    \item there exists a real number $\epsilon>0$ and a nontrivial multiplicative character $\chi$ of $\F_{q^2}$ such that
\begin{equation}\label{eq:tech}
\frac{1}{k}
\bigg|\sum_{j=1}^{k} \chi(x_j)\bigg| \geq \varepsilon,  
\end{equation}
whenever $x_1,x_2, \ldots, x_k \in S$;
\item $q=p^n$, where $p>4.1n^2/\epsilon^2$.
\end{enumerate}
Assuming $d>1$ and $d\mid (q+1)$, by taking $\epsilon=1$ and $\chi$ a multiplicative character of $\F_{q^2}$ with order $d$, this recovers the result by Blokhuis and Sziklai on $GP(q^2,d)$ provided that $q=p^n$ with $p>4.1n^2$. While this approach successfully extended Blokhuis' theorem to a larger family of Cayley graphs, inequality~\eqref{eq:tech} in the first assumption is generally difficult to verify. Also, the second assumption is undesired (since there are infinitely many $q$ for which the assumption fails), although it is less important. Another limitation of this approach is that it relies on deep results from number theory. 

In this paper, inspired by several ideas from \cite{AY22, S20, Y25+}, we develop a new approach to prove Blokhuis' theorem and its extension. Our proof combines tools from finite geometry and additive combinatorics, and avoids the use of heavy machinery from number theory from \cite{AY22, Y22}. In particular, we give a short proof of Blokhuis' theorem, Sziklai's theorem, as well as a result of the author \cite{Y22} concerning a converse of Sziklai's theorem.

Our main result is the following.
\begin{thm}\label{thm:main}
Let $S \subseteq \F_{q^2}^*$ with $S=-S$ such that 
$$
\big|SSSS^{-1}S^{-1}S^{-1}\big|\leq \frac{q^2-3}{2} \quad \text{or} \quad \big|SS^{-1}S^{-1}\F_q^*\big|\leq \frac{q^2-1}{2}.
$$
If $A\subseteq \F_{q^2}$ with $|A|=q$ and $0,1\in A$ such that $A-A \subseteq S \cup \{0\}$, then $A$ is the subfield $\F_q$. Equivalently, if $A\subseteq \F_{q^2}$ is a clique in $X=\Cay(\F_{q^2}; S)$ with $|A|=q$ and $0,1\in A$, then $A$ is the subfield $\F_q$.   
\end{thm}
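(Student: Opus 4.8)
The plan is to recast the problem in the affine plane and reduce it to a statement about \emph{directions}. Fix an $\F_q$-basis of $\F_{q^2}$, so that $\F_{q^2}\cong\AG(2,q)$ and $A$ becomes a set of $q$ points. Since two points $a,b\in A$ determine the same direction precisely when $a-b$ lies in a fixed coset of $\F_q^*$ in $\F_{q^2}^*$, the $q+1$ directions of $\AG(2,q)$ are identified with the cosets of $\F_q^*$, i.e.\ with the cyclic quotient $\F_{q^2}^*/\F_q^*\cong\Z/(q+1)$ via the projection $\pi\colon\F_{q^2}^*\to\F_{q^2}^*/\F_q^*$. Under this identification the set of directions \emph{determined} by $A$ is exactly $\pi\big((A-A)\setminus\{0\}\big)$. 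The key reduction is that it suffices to prove that $A$ determines a single direction: a set of $q$ points with only one determined direction is an affine line, and a line through $0$ and $1$ is forced to be $\F_q$ (if $A=u\F_q+v$ with $0,1\in A$, then $u,v\in\F_q$ and $A=\F_q$).

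Next I would convert the multiplicative-doubling hypothesis into an upper bound on the number $N$ of determined directions. Writing $T=\pi(S)\subseteq\Z/(q+1)$, the clique condition $(A-A)\setminus\{0\}\subseteq S$ gives $\pi\big((A-A)\setminus\{0\}\big)\subseteq T$, so $N\le|T|$. Under the second hypothesis the set $SS^{-1}S^{-1}\F_q^*$ is invariant under multiplication by $\F_q^*$, hence is a union of cosets; the bound $|SS^{-1}S^{-1}\F_q^*|\le\frac{q^2-1}{2}=(q-1)\cdot\frac{q+1}{2}$ therefore says it is a union of at most $\frac{q+1}{2}$ cosets, that is $|T-T-T|\le\frac{q+1}{2}$ (additive notation in $\Z/(q+1)$). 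Since $|T|\le|T-T-T|$ for any nonempty $T$, this yields $N\le\frac{q+1}{2}$. For the first hypothesis I would obtain the same conclusion by a Cauchy--Davenport/Kneser-type estimate in the cyclic quotient, bounding the number of cosets met by $S$ in terms of $|SSSS^{-1}S^{-1}S^{-1}|$.

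With $N\le\frac{q+1}{2}$ in hand I would appeal to finite geometry. Because $N<q+1$, some direction is not determined, so after an affine change of coordinates $A$ is the graph of a function $\F_q\to\F_q$. Since $N$ lies below the generic threshold $\frac{q+3}{2}$, the R\'edei--Blokhuis--Ball direction theorem applies: either $N=1$ and $A$ is a line (and we are done by the reduction above), or $A$ is $\F_s$-linear for some subfield $\F_s\subseteq\F_q$ with $s>1$. In the latter case, using $0\in A$ we may take $A$ to be an $\F_s$-subspace of $\F_{q^2}$ of size $q$, and $1\in A$ forces $\F_s\subseteq A$; note also $A-A=A$, so all nonzero elements of $A$ lie in $S$.

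The main obstacle is the final step: ruling out \emph{proper} subfields, i.e.\ showing $s=q$ (equivalently $A=\F_q$). The point is that an $\F_s$-subspace $A$ of size $q$ with $1\in A$ is the field $\F_q$ if and only if it is closed under multiplication; when $A\neq\F_q$ it is not multiplicatively closed, and I expect its multiplicative product set to expand to nearly all of $\F_{q^2}^*$. Concretely, writing $D=A\setminus\{0\}\subseteq S$, I would prove a sum--product/expansion estimate for $\F_s$-linear sets forcing $|DD^{-1}D^{-1}\F_q^*|>\frac{q^2-1}{2}$ (respectively $|DDDD^{-1}D^{-1}D^{-1}|>\frac{q^2-3}{2}$) whenever $A$ is a proper subspace, contradicting the hypothesis through $D\subseteq S$. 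This expansion statement for linear sets that are not subfields is the technical heart of the argument and is exactly where the precise constants $\frac{q^2-3}{2}$ and $\frac{q^2-1}{2}$ should enter; establishing it cleanly, without reverting to character-sum machinery, is the crux.
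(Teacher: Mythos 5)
Your proposal has two genuine gaps. First, the reduction of the \emph{first} hypothesis to a direction count in $\AG(2,q)$ fails: you propose to bound the number of $\F_q^*$-cosets met by $S$ in terms of $\big|SSSS^{-1}S^{-1}S^{-1}\big|$, but no Cauchy--Davenport/Kneser-type estimate can do this. Take $S$ to be the subgroup of $\F_{q^2}^*$ of index $3$ with $3\mid(q-1)$; then $S=-S$, $SSSS^{-1}S^{-1}S^{-1}=S$ has size $\frac{q^2-1}{3}\leq\frac{q^2-3}{2}$, yet $\gcd(3,q+1)=1$ forces $S\F_q^*=\F_{q^2}^*$, so $S$ meets all $q+1$ cosets and your bound $N\leq\frac{q+1}{2}$ is unavailable. (Under the second hypothesis your coset count is fine.) Second, and more seriously, the step you yourself call the crux --- showing that a proper $\F_s$-linear set $A$ of size $q$ forces $\big|DD^{-1}D^{-1}\F_q^*\big|$ or $\big|DDDD^{-1}D^{-1}D^{-1}\big|$ to be too large for $D=A\setminus\{0\}$ --- is only conjectured, not proved. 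That expansion statement is precisely the hard content of the theorem, so the argument does not close.

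For comparison, the paper avoids both difficulties by a different geometric setup and an algebraic endgame. It does not place the $q$ points of $A$ in $\AG(2,q)$; it places the $q^2$ points of $U=A\times A$ in $\AG(2,q^2)$, where the direction set is $\frac{A-A}{A-A}\subseteq SS^{-1}\cup\{0,\infty\}$, so the hypotheses bound $|\mathcal{D}_U|$ directly and Theorem~\ref{linear} (over $\F_{q^2}$, with $\beta\in\F_{q^4}\setminus\F_{q^2}$) yields only that $A$ is closed under addition --- no attempt is made to control which subfield $A$ is linear over. The subfield is then pinned down not by expansion estimates but by observing that $A^{-1}$ is a clique for the connection set $T=SS^{-1}S^{-1}$ (this is exactly why the hypotheses are phrased in terms of $SS^{-1}S^{-1}$ rather than $S$), so the same direction argument makes $A^{-1}$ additively closed as well; a short computation (Claim~\ref{claimx^2}, feeding Proposition~\ref{prop:subfield}) then shows that a set of size $q$ containing $0,1$ with both $A$ and $A^{-1}$ closed under addition must be $\F_q$. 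The $A^{-1}$ device is the idea your outline is missing, and it is what makes both the false coset-counting step and the unproved expansion lemma unnecessary.
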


Inequality~\eqref{eq:tech} above from \cite{AY22} attempts to measure the multiplicative structure of the connection set $S$ analytically. Instead, in our approach, to measure the richness of the multiplicative structure of $S$, we follow the standard approach in additive combinatorics, namely, study the (multiplicative) doubling constant $C=|SS|/|S|$. If $C=1$, then $S$ is closed under multiplication and in particular $S$ is a subgroup of $\F_{q^2}^*$. In general, if $C$ is small, then $S$ is ``close to" a subgroup of $\F_{q^2}^*$ by the Freiman theorem for groups proved by Green and Ruzsa~\cite{GR07}. In our setting, the Pl\"unnecke–Ruzsa inequality (see for example \cite[Theorem 1.2]{P12}) implies that $|SSSS^{-1}S^{-1}S^{-1}|\leq C^6|S|$. Thus, we obtain the following corollary immediately, which shows that Theorem~\ref{thm:main} applies if $S$ has small doubling and $|S|$ is not too large.

\begin{cor}\label{cor1}
Let $S \subseteq \F_{q^2}^*$ with $S=-S$. Assume that $C^6|S|\leq \frac{q^2-3}{2}$, where $C=|SS|/|S|$. If $A\subseteq \F_{q^2}$ is a clique in $X=\Cay(\F_{q^2}; S)$ with $|A|=q$ and $0,1\in A$, then $A$ is the subfield $\F_q$. 
\end{cor}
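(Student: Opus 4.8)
The plan is to reduce the statement to the classical theorem on the number of directions determined by a point set in $\AG(2,q)$, after a multiplicative inversion trick that converts the product-set hypotheses into a bound on a direction set. Identify $\F_{q^2}$ with the affine plane $\AG(2,q)$ regarded as a two-dimensional $\F_q$-vector space, and identify the set of directions (the points of $\PG(1,q)$) with the quotient group $\Gamma=\F_{q^2}^*/\F_q^*$, writing $\overline{x}$ for the image of $x\in\F_{q^2}^*$. A set $U$ of $q$ points determines the direction $\delta\in\Gamma$ precisely when $\delta=\overline{u-u'}$ for some distinct $u,u'\in U$; I write $D(U)$ for the set of directions it determines.

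The key device is inversion. Given a clique $A$ with $0,1\in A$, $|A|=q$ and $A-A\subseteq S\cup\{0\}$ (so in particular $1=1-0\in S$ and $A\setminus\{0\}\subseteq S$), set
$$B=\{0\}\cup (A\setminus\{0\})^{-1}.$$
Then $|B|=q$ and $0,1\in B$. A direct computation shows $B$ is again a clique, now for the larger connection set $SS^{-1}S^{-1}$: for distinct nonzero $a,a'\in A$ one has $a^{-1}-a'^{-1}=(a'-a)(aa')^{-1}\in S\cdot S^{-1}S^{-1}$, using $a'-a\in S$ (as $S=-S$) and $a,a'\in S$, while $a^{-1}-0=a^{-1}\in S^{-1}\subseteq SS^{-1}S^{-1}$. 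Hence $D(B)\subseteq \overline{SS^{-1}S^{-1}}$. The point of passing to $B$ is that a line through $0$ and $1$ is forced to be the subfield: if $B$ is a line then, being a $1$-dimensional $\F_q$-subspace containing $1$, $B=\F_q$, and then $(A\setminus\{0\})^{-1}=\F_q^*$ gives $A=\F_q$. So it suffices to prove that $B$ is a line.

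To force $B$ to be a line I would invoke the Blokhuis--Ball--Brouwer--Storme--Sz\H{o}nyi theorem on directions: a set of $q$ points in $\AG(2,q)$ is either a line, or is $\F_{p^e}$-linear for some $1\le e<n$ (where $q=p^n$), or determines at least $(q+3)/2$ directions. Under the second hypothesis, $SS^{-1}S^{-1}\F_q^*$ is a union of full $\F_q^*$-cosets, so $|\overline{SS^{-1}S^{-1}}|=|SS^{-1}S^{-1}\F_q^*|/(q-1)\le (q^2-1)/(2(q-1))=(q+1)/2$; thus $|D(B)|\le (q+1)/2<(q+3)/2$, which rules out the generic case. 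Under the first hypothesis I would instead work with the set of difference ratios of $B$, namely $(B-B)(B-B)^{-1}\subseteq SSSS^{-1}S^{-1}S^{-1}$, whose bound $(q^2-3)/2$ says this set avoids more than half of $\F_{q^2}^*$; this richness deficit should exclude the generic $(q+3)/2$ case by an analogous counting, now carried out in $\F_{q^2}^*$ rather than in $\Gamma$ (this is where the sharper constant $(q^2-3)/2$ is used).

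The hard part will be excluding the intermediate $\F_{p^e}$-linear cases, since these genuinely determine few directions (for instance a Baer-type $\F_p$-linear set in $\AG(2,p^2)$ determines only $p+1$ directions, well below $(q+1)/2$), so the direction count alone does not dispose of them. Here I would exploit the fact that an $\F_{p^e}$-linear set forces its direction set $D(B)$ to be highly multiplicatively structured---essentially a coset of a subgroup of $\Gamma$, or the points of a subline $\PG(1,p^e)$---so that the induced product sets $\overline{SS^{-1}S^{-1}}$ and $D(B)D(B)^{-1}$ become large, contradicting whichever of the two hypotheses is in force. Combining this with the rigidity coming from the fact that both $A$ and its inverse $B$ are cliques (so the subfield structure would have to be compatible with inversion) should leave only $e=n$, i.e.\ $B=\F_q$. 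I expect the bulk of the technical work, and the reason for the two slightly different numerical thresholds, to lie precisely in this elimination of the subfield-linear configurations.
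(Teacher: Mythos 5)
There is a genuine gap, and it starts with which statement you are actually proving. The corollary's only hypothesis is the doubling condition $C^6|S|\leq \frac{q^2-3}{2}$ with $C=|SS|/|S|$, and your argument never uses it: throughout you appeal to ``the first hypothesis'' $|SSSS^{-1}S^{-1}S^{-1}|\leq \frac{q^2-3}{2}$ and ``the second hypothesis'' $|SS^{-1}S^{-1}\F_q^*|\leq \frac{q^2-1}{2}$, which are the alternatives in Theorem~\ref{thm:main}, not the hypothesis of Corollary~\ref{cor1}. The entire content of the corollary, as the paper derives it, is the bridge you omit: the Pl\"unnecke--Ruzsa inequality gives $|SSSS^{-1}S^{-1}S^{-1}|\leq C^6|S|\leq \frac{q^2-3}{2}$, after which Theorem~\ref{thm:main} applies verbatim. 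Without that one line, the doubling hypothesis is never connected to any bound on an iterated product set, and the proof does not get off the ground.

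Even read as an attempt on Theorem~\ref{thm:main} itself, the core difficulty is left open, and you say so yourself: after invoking the Blokhuis--Ball--Brouwer--Storme--Sz\H{o}nyi theorem you must rule out the proper $\F_{p^e}$-linear configurations, which (as you correctly observe) determine few directions, so no direction count can exclude them; the proposed ``multiplicative rigidity of direction sets of linear sets'' is speculation, not an argument. The paper's route is different and sidesteps this entirely. Your inversion step --- that $B=A^{-1}\cup\{0\}$ is a clique for $T=SS^{-1}S^{-1}$ --- is exactly the paper's move, but it is \emph{not} used to force $B$ to be a line. Instead, the direction theorem is applied to $U=A\times A$ in $\AG(2,q^2)$ (Proposition~\ref{prop:main}), which yields only that $A$ is closed under addition; the same applied to $B$ (using $TT^{-1}=SSSS^{-1}S^{-1}S^{-1}$) yields that $A^{-1}$ is closed under addition. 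The linear cases are then killed by the purely algebraic Proposition~\ref{prop:subfield}: from $\frac{1}{x}-\frac{1}{x+y}=\frac{1}{x^2y^{-1}+x}\in A^{-1}$ one gets $x^2y^{-1}\in A$, whence $A$ is closed under multiplication and inversion and must be $\F_q$. To repair your proposal you would need to (i) insert the Pl\"unnecke--Ruzsa step and (ii) replace the unproven elimination of $\F_{p^e}$-linear sets with an argument of this kind exploiting that both $A$ and $A^{-1}$ are additive subspaces.
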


Under the assumption in Theorem~\ref{thm:main}, the only possible clique in $X$ with size $q$ containing $0,1$ is the subfield $\F_q$. To check whether $\F_q$ indeed forms a clique, it suffices to verify if $\F_q=\F_q-\F_q \subseteq S \cup \{0\}$. Thus, under some mild assumption on $S$, Theorem~\ref{thm:main} implies the following dichotomy: if $\F_q \subseteq S \cup \{0\}$, then all maximum cliques arise from the subfield $\F_q$; otherwise, $\omega(X)\leq q-1$. Next, we consider these two cases separately in a more precise manner. 

\medskip

Let $X=\Cay(\F_{q^2}; S)$. Assume that $S$ is a proper subgroup of $\F_{q^2}^*$ such that $\F_q^* \subseteq S$ so that $\F_q$ is a clique in $X$. Since $\F_q^* \subseteq S$ and $S$ is a proper subgroup of $\F_{q^2}^*$, we have $|SS^{-1}S^{-1}\F_q^*|=|S|\leq \frac{q^2-1}{2}$ and thus Theorem~\ref{thm:main} applies. As a consequence, we recover Blokhuis' theorem and Sziklai's theorem immediately. 

\begin{cor}[Blokhuis, Sziklai]\label{cor:BS}
Let $d\geq 2$ be an integer and $q$ be a prime power with $d\mid (q+1)$. Then the only maximum clique in $GP(q^2,d)$ containing $0,1$ is the subfield $\F_q$.    
\end{cor}

Next we consider generalized Paley graphs $GP(q^2,d)$ with $d\nmid (q+1)$. In this case, the main result in \cite{AY22} does not apply: it is easy to verify that the subfield $\F_q$ does not form a clique \cite[Lemma 2.2]{Y22} and there is no obvious large clique. Thus, it is tempting to predict that $\omega(GP(q^2,d))\leq q-1$. This was confirmed by the author \cite{Y22} via a Fourier analytic argument, which complements Sziklai's theorem. The proof in \cite{Y22} crucially relied on several results from algebraic number theory, namely special properties of pure Gauss sums and semi-primitive Gauss sums, including Stickelberger's theorem \cite[Theorem 11.6.3]{BEW}, Hasse-Davenport lifting Theorem \cite[Theorem 11.5.2]{BEW}, and Evans' theorem~\cite{E81}. Given the nature of the proof, it only worked for generalized Paley graphs and cannot be extended to general Cayley graphs. 
Theorem~\ref{thm:main} quickly recovers this result; moreover, it can be used to strengthen this result. We illustrate one possible strengthening in the theorem below, where the connection set $S$ is the union of several cosets of a fixed subgroup of $\F_{q^2}^*$. 

\begin{thm}\label{thm:GP}
Let $H$ be a subgroup of $\F_{q^2}^*$ with index $d\geq 2$. Let $k$ be a nonnegative integer, $g$ be a generator of $\F_{q^2}^*$, $S=\bigcup_{j=0}^k g^j H$. Assume that $S=-S$. Let $X=\Cay(\F_{q^2};S)$.
\begin{enumerate}
    \item If $d \mid (q+1)$ and $d\geq 6k+2$, then $\omega(X)=q$ and all maximum cliques are of the form $a\F_q+b$, where $a\in S$ and $b\in \F_{q^2}$.
    \item If $d \nmid (q+1)$, $d\geq 12k+3$, and $q^2-1\geq 2d$, then $\omega(X)\leq q-1$.
\end{enumerate}
\end{thm}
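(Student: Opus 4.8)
The plan is to derive both parts from \Cref{thm:main} by exploiting the coset structure of $S$. Set $m=|H|=(q^2-1)/d$, so that $S$ is the union of the $k+1$ cosets $g^0H,\dots,g^kH$, which I label by their exponents $\{0,1,\dots,k\}\subseteq\Z/d\Z$. The key observation is that a product of unions of $H$-cosets is again a union of $H$-cosets, and the cosets occurring are indexed exactly by the sumset, taken in $\Z/d\Z$, of the corresponding exponent sets (with $g^iH=g^jH$ iff $i\equiv j\pmod d$). Before invoking \Cref{thm:main} I reduce to cliques through $0$ and $1$: if $A$ is a clique of $X$ and $b\in A$, then $A-b$ is a clique containing $0$, and choosing $a\in(A-b)\setminus\{0\}\subseteq S$ (say $a\in g^jH$), the set $a^{-1}(A-b)$ is a clique of $\Cay(\F_{q^2};S')$ containing $0$ and $1$, where $S'=a^{-1}S$ is again a union of $k+1$ consecutive cosets, now indexed by $\{-j,\dots,k-j\}$, and still satisfies $S'=-S'$.

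Now I verify the hypothesis of \Cref{thm:main} for $S'$. By the sumset description, $S'(S')^{-1}(S')^{-1}$ is indexed by an interval of $3k+1$ consecutive residues, and $S'S'S'(S')^{-1}(S')^{-1}(S')^{-1}$ by the interval $\{-3k,\dots,3k\}$ of $6k+1$ consecutive residues; under the hypotheses of either part one has $6k+1\le d$, so these residues are distinct modulo $d$ and the two product sets are unions of exactly $3k+1$ and $6k+1$ cosets of $H$, of sizes $(3k+1)m$ and $(6k+1)m$. In part (1), $d\mid(q+1)$ forces $\F_q^*\subseteq H$, so multiplication by $\F_q^*$ fixes every union of $H$-cosets; hence $|S'(S')^{-1}(S')^{-1}\F_q^*|=(3k+1)\tfrac{q^2-1}{d}\le\tfrac{q^2-1}{2}$ precisely because $d\ge 6k+2$, which is the second hypothesis of \Cref{thm:main}. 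In part (2) I use the first hypothesis: $|S'S'S'(S')^{-1}(S')^{-1}(S')^{-1}|=(6k+1)\tfrac{q^2-1}{d}$, and using $d\ge 12k+3$ this is $\le\tfrac{q^2-3}{2}$ as soon as $q^2\ge 24k+7$, which holds since $q^2-1\ge 2d\ge 2(12k+3)$. In either case \Cref{thm:main} gives that every size-$q$ clique of $X$ through $0$ and a point $a\in S$ equals $a\F_q$.

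It remains to assemble the statements. In part (1), $\F_q^*\subseteq H\subseteq S$ yields $\F_q-\F_q=\F_q\subseteq S\cup\{0\}$, so $\F_q$ is a clique and $\omega(X)\ge q$; conversely $a\F_q+b$ is a clique for every $a\in S$ and $b\in\F_{q^2}$, since $a\F_q^*\subseteq g^jH\subseteq S$. If there were a clique of size $q+1$, translating it to contain $0$ and fixing a point $a\ne 0$ in it would produce at least two distinct size-$q$ subcliques through $0$ and $a$ (here $q\ge 3$; small $q$ are immediate), each equal to $a\F_q$ by the previous paragraph, which is impossible. Hence $\omega(X)=q$, and the reduction shows every maximum clique has the form $a\F_q+b$. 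In part (2), suppose $\omega(X)\ge q$; then a size-$q$ clique exists, and translating and scaling turns it into $a\F_q$, forcing $\F_q^*\subseteq S'$. But the image of $\F_q^*$ in $\F_{q^2}^*/H\cong\Z/d\Z$ is the subgroup generated by $q+1$, of order $d/\gcd(d,q+1)\ge 2$ since $d\nmid(q+1)$; its elements are spaced by $e=\gcd(d,q+1)\le d/2$, so the smallest block of consecutive residues covering it has size $d-e+1\ge d/2+1>k+1$, contradicting $\F_q^*\subseteq S'$. Therefore $\omega(X)\le q-1$.

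The main obstacle is the exact bookkeeping in the second paragraph: one must confirm that no wrap-around occurs modulo $d$, so that the product sets really are unions of precisely $3k+1$ and $6k+1$ cosets, and that the resulting cardinalities meet the thresholds of \Cref{thm:main} on the nose. The role of the hypothesis $q^2-1\ge 2d$ is especially delicate, since it is exactly what upgrades the available bound $(q^2-1)/2$ to the $(q^2-3)/2$ demanded by the first hypothesis in part (2). The other crucial use of the arithmetic hypotheses is the final combinatorial claim that the footprint of $\F_q^*$ in $\F_{q^2}^*/H$ fails to fit into $k+1$ consecutive cosets precisely when $d\nmid(q+1)$, which is what cleanly separates the two parts.
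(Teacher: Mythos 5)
Your proposal is correct and follows essentially the same route as the paper: translate and rescale a putative size-$q$ clique to one through $0$ and $1$, verify the hypotheses of Theorem~\ref{thm:main} by counting the $H$-cosets indexing the relevant product sets (with the same thresholds $(3k+1)(q^2-1)/d\leq(q^2-1)/2$ and $(6k+1)(q^2-1)/d\leq(q^2-3)/2$, the latter using $q^2-1\geq 2d$), and in part (2) derive a contradiction from the fact that the image of $\F_q^*$ in $\F_{q^2}^*/H$, namely the subgroup generated by $q+1\bmod d$, cannot fit into $k+1$ consecutive cosets. The only cosmetic difference is in that final step, where the paper counts the cosets $g^{id'}H$ (with $d'=\gcd(d,q+1)$) met by $T$ versus by $\F_q^*$ to get $d/d'\leq\lfloor k/d'\rfloor+1$, while you bound the length $d-e+1$ of the minimal cyclic interval covering the subgroup; both are the same observation with different bookkeeping.
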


In particular, when $k=0$, Theorem~\ref{thm:GP}(1) recovers Corollary~\ref{cor:BS} and Theorem~\ref{thm:GP}(2) recovers the main result in \cite{Y22}. We note that under the additional assumption that the characteristic $p$ is sufficiently large compared to $d$ and $n$, where $q=p^n$, Theorem~\ref{thm:GP}(1) has been further strengthened in \cite[Theorem 2.20]{AY22} by establishing inequality~\eqref{eq:tech}.

\medskip

\textbf{Notations.} We follow standard notations for arithmetic operations among sets.  Given two subsets $A$ and $B$ of a finite field $\F_r$, we write $A+B=\{a+b: a\in A, b\in B\}$, $-A=\{-a:a \in A\}$, $AB=\{ab: a \in A, b \in B\}$, and $A^{-1}=\{a^{-1}: a \in A\}$ if $0\notin A$. We say $A$ is a \emph{subspace} in $\F_r$ if $A$ is closed under addition \footnote{Since $\F_r$ is a finite field, if $A\subseteq \F_r$ is closed under addition, then $A$ is a vector space over $\F_p$, where $p$ is the characteristic of $\F_r$}.

\section{Directions determined by a point set in an affine plane}\label{prelim}
Let $AG(2,r)$ denote the {\em affine Galois plane} over the finite field $\F_r$. Let $U \subseteq AG(2,r)$. We use Cartesian coordinates in $AG(2,r)$ so that $U=\{(x_i,y_i):1 \leq i \leq |U|\}$.
The set of {\em directions determined by} $U \subseteq AG(2, r)$ is 
\[ \mathcal{D}_U=\left\{ \frac{y_j-y_i}{x_j-x_i} \colon 1\leq i <j \leq |U| \right \} \subseteq PG(1,r) \cong \F_r \cup \{\infty\},\]
 where $\infty$ is the vertical direction. The theory of directions is well-studied; we refer to R\'edei's book \cite{LR73} and Sz\H{o}nyi's survey \cite{S99}. The connection with the theory of directions and Cayley graphs has been explored in \cite{AY22, DSW, GY24, Y25+}.

R\'edei's seminal result \cite{LR73} states if $p$ is a prime and $U \subseteq AG(2,p)$ with $|U|=p$, then either all points in $U$ are collinear, or $|\mathcal{D}_U|\geq \frac{p+3}{2}$. For a general prime power $r$, one expects that a point set $U \subseteq AG(2,r)$ with $|U|=r$ determines many directions, unless $U$ admits some obvious algebraic structure. This can be made precise in the following celebrated result due to Blokhuis, Ball, Brouwer, Storme, and Sz{\H{o}}nyi \cite{Ball03, BBBSS}.
\begin{thm}[Blokhuis, Ball, Brouwer, Storme, and Sz{\H{o}}nyi]\label{linear}
 Let $U \subseteq AG(2,r)$ be a point set with $r$ points such that $(0,0) \in U$. Let $p$ be the characteristic of $\F_r$. 
 If $|\mathcal{D}_U|\leq \frac{r+1}{2}$, then $U$ is $\F_p$-linear, that is, for each $\beta \in \F_{r^2} \setminus \F_r$, $W=\{x+\beta y: (x,y) \in U\}$ forms a subspace in $\F_{r^2}$. 
\end{thm}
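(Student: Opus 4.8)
The plan is to prove this via the polynomial method, using the Rédei polynomial; the statement is the culmination of the direction problem, so I would follow the classical strategy of Rédei together with the lacunary-polynomial refinements. Normalize $U = \{(a_i,b_i) : 1 \le i \le r\}$ with $(a_1,b_1)=(0,0)$. The hypothesis $|\mathcal{D}_U| \le \frac{r+1}{2}$ says that, out of the $r+1$ directions of $PG(1,r)$, at least $\frac{r+1}{2}$ are \emph{not} determined by $U$; discarding the vertical direction leaves at least $\frac{r-1}{2}$ non-determined finite slopes $m \in \F_r$. First I would form the Rédei polynomial
$$R(X,Y) = \prod_{i=1}^{r}\big(X + a_i Y - b_i\big) = \sum_{j=0}^{r} \sigma_j(Y)\,X^{r-j} \in \F_r[X,Y],$$
in which $\sigma_j$ is, up to sign, the $j$-th elementary symmetric function of the linear forms $a_i Y - b_i$, so that $\deg_Y \sigma_j \le j$.

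The second step is the key specialization. For a finite slope $m$, the fiber $R(X,m) = \prod_i \big(X - (b_i - a_i m)\big)$ has as its roots the intercepts $b_i - a_i m$, with multiplicities recording how the points of $U$ distribute among the lines of slope $m$. If $m$ is non-determined, then these $r$ intercepts are pairwise distinct, hence fill out all of $\F_r$, so $R(X,m) = \prod_{c\in\F_r}(X-c) = X^r - X$. Comparing coefficients, $\sigma_j(m)=0$ for every $1 \le j \le r-2$. Since there are at least $\frac{r-1}{2}$ such $m$ while $\deg_Y \sigma_j \le j$, each $\sigma_j$ with $j < \frac{r-1}{2}$ has more roots than its degree and so vanishes identically. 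This produces the crucial \emph{lacunary} (gap) structure
$$R(X,Y) = X^r + \big(\text{terms of } X\text{-degree at most } \tfrac{r+1}{2}\big),$$
i.e. the coefficients of $X^{r-1},\dots,X^{(r+1)/2}$ all vanish.

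The hard part, and the step I expect to be by far the most technical, is to upgrade this degree gap into the rigid conclusion that $U$ is $\F_p$-linear. Here I would invoke the theory of fully reducible lacunary polynomials over $\F_r$: for each non-determined $m$ the fiber $R(X,m) = X^r - X$ splits into $r$ distinct linear factors, and this fully reducible behaviour, combined with the gap above, is exactly the regime analyzed by Rédei and Megyesi and then sharpened by Blokhuis, Ball, Brouwer, Storme, and Sz\H{o}nyi. The decisive input is their structure theorem for such polynomials: writing $r = p^e$, one studies the greatest common divisor with $X^r - X$, differentiates, and uses the Frobenius map to force the relevant factors to be additive ($p$-power linearized) polynomials; geometrically this is precisely the assertion that each $W = \{x+\beta y : (x,y)\in U\}$ is closed under addition, hence an $\F_p$-subspace of $\F_{r^2}$. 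The genuinely delicate point---controlling the degrees of these factors and excluding the intermediate-subfield configurations that can only occur when $r$ is a proper prime power---is the heart of the argument; in the prime case $r = p$ the gap already kills $\sigma_j$ for all $1\le j\le p-2$, and the $\F_p$-linearity of $U$ follows directly, recovering R\'edei's original theorem.
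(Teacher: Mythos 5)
The paper does not actually prove this theorem internally: its ``proof'' is a one-line reduction, namely that any set of $r$ points in $AG(2,r)$ not determining all directions is affinely equivalent to the graph of a function, after which it cites Ball's theorem as a black box. Your proposal instead attempts to sketch the internal argument of the cited works. The opening steps of your sketch are sound and standard: the count of at least $\frac{r-1}{2}$ non-determined finite slopes, the R\'edei polynomial, the observation that a non-determined slope $m$ forces $R(X,m)=X^r-X$, and the resulting identical vanishing of the low-index coefficients $\sigma_j(Y)$. (One small slip: a polynomial of degree exactly $\frac{r-1}{2}$ with $\frac{r-1}{2}$ roots need not vanish, so your gap should stop at $X$-degree $\frac{r+3}{2}$ rather than $\frac{r+1}{2}$; this does not affect the strategy.)

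The genuine gap is that the entire content of the theorem lives in the step you defer. Passing from the lacunary shape $R(X,Y)=X^r+(\text{low-degree terms})$ plus full reducibility of the fibers to the conclusion that $U$ is $\F_p$-linear is not a routine consequence of ``differentiate and use Frobenius''; it is the main theorem of Blokhuis--Ball--Brouwer--Storme--Sz\H{o}nyi and Ball, whose proof occupies those papers and requires a careful case analysis over the intermediate subfields $\F_{p^e}\subseteq\F_r$ (the exponent $e$ governing which linearized structure appears). Your proposal names this structure theorem as ``the decisive input'' but does not prove it, so as a standalone argument it establishes only the preliminary degree gap, not the linearity conclusion. In effect you and the paper end up in the same place --- both rely on the deep result as an external citation --- but your sketch presents the citation as if it were the final step of a proof rather than the proof itself. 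If the intent is to match the paper, the honest statement is simply the paper's: this is a special case of Ball's theorem, modulo the affine-equivalence normalization to the graph of a function.
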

\begin{proof}
This is a special case of a result by Ball \cite{Ball03}. Note that the main theorem in \cite{Ball03} assumes that $U$ is the graph of the function $f\colon \F_r \to \F_r$. However, it is well-known that any set of $r$ points in $AG(2,r)$ that does not determine all the directions is affinely equivalent to the graph of a function (see for example \cite[page 342]{Ball03}).
\end{proof}

Next, we use Theorem~\ref{linear} to deduce some useful consequences.

\begin{prop}\label{prop:main}
Let $S \subseteq \F_{q^2}^*$ with $S=-S$ and $|SS^{-1}|\leq \frac{q^2-3}{2}$. If $A\subseteq \F_{q^2}$ is a clique in $X=\Cay(\F_{q^2}; S)$ with $|A|=q$ and $0\in A$, then $A$ is a subspace in $\F_{q^2}$.
\end{prop}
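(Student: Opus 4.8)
The plan is to reinterpret $A$ as a point set in $AG(2,q)$ and apply \cref{linear}. Fix any $\beta \in \F_{q^2} \setminus \F_q$ and use $\{1,\beta\}$ as an $\F_q$-basis, so that each element of $\F_{q^2}$ is written uniquely as $x+\beta y$ with $x,y \in \F_q$; this identifies $\F_{q^2}$ with $AG(2,q)$ and turns $A$ into a set $U$ of $q$ points with $(0,0) \in U$, since $0 \in A$. Under this identification the slope of a nonzero element $x+\beta y$ is $y/x \in \F_q \cup \{\infty\}$, which is invariant under scaling by $\F_q^*$; hence the directions determined by $U$ correspond exactly to the $\F_q^*$-cosets in $\F_{q^2}^*$ that meet the difference set, and $\mathcal{D}_U = \{\, d\F_q^* : d \in A-A,\ d\neq 0 \,\}$ viewed inside $\F_{q^2}^*/\F_q^* \cong \PG(1,q)$. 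If I can show $|\mathcal{D}_U| \le \tfrac{q+1}{2}$, then \cref{linear} (with $r=q$) applies and concludes that $U$ is $\F_p$-linear; taking this conclusion for the very $\beta$ used to set up the coordinates shows that $A=\{x+\beta y:(x,y)\in U\}$ is itself a subspace, as desired. So the entire proof reduces to bounding the number of directions.

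To bound the directions I would work in the cyclic quotient $Q = \F_{q^2}^*/\F_q^* \cong \Z/(q+1)$ with quotient map $\pi$. Write $D = (A-A)\setminus\{0\}$; since $A$ is a clique we have $D \subseteq S$, so $DD^{-1}\subseteq SS^{-1}$ and in particular $|D|\le |S| \le |SS^{-1}| \le \tfrac{q^2-3}{2} < \tfrac{1}{2}|\F_{q^2}^*|$. Let $T = \pi(D) \subseteq Q$ be the set of determined directions, $N=|T|$, and let $n_t = |D \cap C_t|$ count the differences lying in the coset $C_t$, so that $\sum_{t\in T} n_t = |D|$. The image of $DD^{-1}$ is the coset-difference set $T-T$, and each coset $C_c$ with $c = t-t'$ receives the product $(D\cap C_t)(D\cap C_{t'})^{-1}$. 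The goal is to show that the $N$ determined cosets are forced to be so densely covered by $DD^{-1}$ that $N(q-1)$ becomes essentially a lower bound for $|SS^{-1}|$; then $N(q-1) \le \tfrac{q^2-3}{2} < \tfrac{q^2-1}{2} = (q-1)\cdot\tfrac{q+1}{2}$ yields $N \le \tfrac{q+1}{2}$.

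The main obstacle is precisely this coverage estimate: converting the support statement \emph{$DD^{-1}$ meets the cosets $T-T$} into a genuine cardinality bound on $SS^{-1}$. A single determined direction need not, on its own, fill an entire $\F_q^*$-coset of differences, so the argument must exploit both that $A$ attains the full size $q$ and that $S$ has small multiplicative doubling. The key sub-claim to establish is that in this small-difference-set regime each determined coset contributes close to its full $q-1$ elements to $DD^{-1}$, equivalently that the fibers $D\cap C_t$ are large enough that $(D\cap C_t)(D\cap C_{t'})^{-1}$ nearly exhausts $C_{t-t'}$. Because $q+1$ is in general not prime, the relevant coverage and sumset lower bounds in $\Z/(q+1)$ cannot be read off from Cauchy--Davenport and instead call for Kneser's theorem or a Pl\"unnecke--Ruzsa-type estimate applied to the fiber structure; carrying this out, and checking that the extremal case saturating $\tfrac{q+1}{2}$ is ruled out by the strict slack in $|SS^{-1}| \le \tfrac{q^2-3}{2}$, is where the real work lies.
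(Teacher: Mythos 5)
Your proof has a genuine gap, and it sits exactly where you say the ``real work lies'': the direction bound is never established, and the route you sketch toward it does not work. You place $A$ itself as a point set of size $q$ in $AG(2,q)$, so that the determined directions are the $\F_q^*$-cosets meeting $D=(A-A)\setminus\{0\}$, and you need this count $N$ to be at most $\frac{q+1}{2}$. But the hypothesis $|SS^{-1}|\leq \frac{q^2-3}{2}$ does not control $N$: your coverage sub-claim requires each coset $C_{t-t'}$ hit by $DD^{-1}$ to contribute close to $q-1$ elements, which fails whenever the fibers $D\cap C_t$ are small. And they can be small: the conclusion of the proposition only makes $A$ an $\F_p$-subspace, and a generic $\F_p$-subspace of size $q=p^n$ meets each $\F_q^*$-coset in a set of size $p^j-1$ with $j$ typically equal to $1$, so it can meet up to $\frac{q-1}{p-1}$ cosets --- for $p=2$ this is $q-1>\frac{q+1}{2}$. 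In other words, the inequality $N\leq\frac{q+1}{2}$ you are trying to prove is not even true in general under the stated hypothesis, so no amount of Kneser or Pl\"unnecke--Ruzsa on the fiber structure will rescue this reduction. (The coset-counting reduction is precisely the mechanism behind Proposition~\ref{prop:main2}, whose hypothesis $|S\F_q^*|\leq\frac{q^2-1}{2}$ literally says $S$ meets at most $\frac{q+1}{2}$ cosets; the two propositions have different hypotheses because they need different arguments.)

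The paper's proof of Proposition~\ref{prop:main} avoids this entirely by changing the ambient plane: take $U=A\times A$ as a set of $q^2$ points in $AG(2,q^2)$, the affine plane over the \emph{big} field. Then the determined directions are exactly the ratios $\frac{A-A}{A-A}\subseteq SS^{-1}\cup\{0,\infty\}$, so the hypothesis $|SS^{-1}|\leq\frac{q^2-3}{2}$ gives $|\mathcal{D}_U|\leq\frac{q^2+1}{2}$ directly, with no coverage argument. Theorem~\ref{linear} with $r=q^2$ and $\beta\in\F_{q^4}\setminus\F_{q^2}$ then makes $W=\{x+\beta y:(x,y)\in U\}$ a subspace of $\F_{q^4}$, and restricting to the diagonal points $(x,x)$ shows $(1+\beta)(x+y)\in W$ for $x,y\in A$, hence $(x+y,x+y)\in A\times A$ and $A$ is closed under addition. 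If you want to salvage your write-up, replace the point set $A\subseteq AG(2,q)$ by $A\times A\subseteq AG(2,q^2)$; the rest of your framework (apply the direction theorem, then read off linearity of $A$) then goes through.
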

\begin{proof}
Since $A$ is a clique, we have $A-A \subseteq S\cup \{0\}$. Let $U= A \times A \subseteq AG(2,q^2)$; then 
$|U|=|A|^2=q^2$ and
$$
\mathcal{D}_U=\frac{A-A}{A-A}\subseteq \frac{S \cup \{0\}}{S \cup \{0\}} \subseteq SS^{-1} \cup \{0, \infty\}.
$$
Since $|SS^{-1}|\leq \frac{q^2-3}{2}$, it follows that $|\mathcal{D}_U|\leq \frac{q^2+1}{2}$. Fix $\beta\in \F_{q^4}\setminus \F_{q^2}$. Theorem~\ref{linear} then implies that $W=\{x+\beta y: (x,y) \in U\}$ forms a subspace in $\F_{q^4}$. 

Let $x,y\in A$. We have $(x,x),(y,y)\in U$ so that $(1+\beta)x, (1+\beta)y\in W$. Since $W$ is a subspace, $(1+\beta)(x+y)\in W$. Since  $1$ and $\beta$ are linearly independent over $\F_{q^2}$, it follows that $x+y\in A$. Thus, $A$ is a subspace, as required. 
\end{proof}

When $S$ is assumed to be the union of $\F_q^*$-cosets in $\F_{q^2}^*$, the assumption in the above proposition can be weakened.
\begin{prop}\label{prop:main2}
Let $S \subseteq \F_{q^2}^*$ with $S=-S$ and $|S\F_q^*|\leq \frac{q^2-1}{2}$. If $A\subseteq \F_{q^2}$ is a clique in $X=\Cay(\F_{q^2}; S)$ with $|A|=q$ and $0\in A$, then $A$ is a subspace in $\F_{q^2}$.
\end{prop}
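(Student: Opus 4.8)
The plan is to reduce to Theorem~\ref{linear}, but applied over the smaller field $\F_q$ rather than over $\F_{q^2}$ as in Proposition~\ref{prop:main}. The guiding principle is that the $\F_q^*$-cosets in $\F_{q^2}^*$ are exactly the objects counted by directions in $AG(2,q)$. First I would fix an element $\theta \in \F_{q^2}\setminus \F_q$, so that $\{1,\theta\}$ is an $\F_q$-basis of $\F_{q^2}$ and every $a \in \F_{q^2}$ is written uniquely as $a = x_a + \theta y_a$ with $x_a, y_a \in \F_q$. This identifies $\F_{q^2}$ with $AG(2,q)$; in particular, it views $A$ as a set of $q$ points in $AG(2,q)$ containing the origin $(0,0)$, coming from $0 \in A$.

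The key observation is that the map $\phi\colon \F_{q^2}^* \to PG(1,q)$ sending $a = x_a + \theta y_a$ to the projective point $[x_a : y_a]$ descends to a bijection between the $q+1$ cosets of $\F_q^*$ in $\F_{q^2}^*$ and the $q+1$ points of $PG(1,q)$. Under this identification, the direction determined by two points $a, a' \in A$, namely $\frac{y_a - y_{a'}}{x_a - x_{a'}}$, is precisely $\phi(a - a')$, the coset of $a - a'$. Next I would invoke the clique hypothesis: since $A - A \subseteq S \cup \{0\}$, every nonzero difference $a - a'$ lies in $S$, so each direction of $A$ is the image under $\phi$ of some element of $S$. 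Hence $\mathcal{D}_A \subseteq \phi(S)$, and since $\phi(S)$ has cardinality $|S\F_q^*|/(q-1)$, the hypothesis $|S\F_q^*| \le \frac{q^2-1}{2}$ yields $|\mathcal{D}_A| \le \frac{q+1}{2}$.

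At this point I would apply Theorem~\ref{linear} with $r = q$: the point set $A \subseteq AG(2,q)$ has $q$ points, contains $(0,0)$, and determines at most $\frac{q+1}{2} = \frac{r+1}{2}$ directions, so $A$ is $\F_p$-linear. Taking $\beta = \theta \in \F_{q^2}\setminus \F_q$ in the conclusion, the set $W = \{x + \theta y : (x,y) \in A\}$ is a subspace of $\F_{q^2}$; but $x + \theta y$ is exactly the field element whose coordinates are $(x,y)$, so $W = A$, and therefore $A$ is a subspace, as required.

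I expect the only genuine obstacle to be conceptual rather than computational: recognizing that the right arena here is $AG(2,q)$, equivalently the projective line $PG(1,q)$ parametrizing $\F_q^*$-cosets, instead of the $AG(2,q^2)$ used for Proposition~\ref{prop:main}. Once the dictionary between $\F_q^*$-cosets in $\F_{q^2}^*$ and directions in $AG(2,q)$ is in place, the count of directions and the application of Theorem~\ref{linear} are routine. The small checks to carry out carefully are that $\phi$ is well defined on cosets and bijective, and that the direction determined by a pair of points in $A$ equals $\phi$ applied to their difference.
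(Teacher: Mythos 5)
Your proof is correct and follows essentially the same route as the paper: the paper's proof simply cites \cite[Theorem 1.2]{AY22}, which is itself derived from Theorem~\ref{linear} via exactly the dictionary you set up between $\F_q^*$-cosets in $\F_{q^2}^*$ and directions in $AG(2,q)$. You have written out in full, and correctly, the argument that the paper delegates to that citation.
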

\begin{proof}
Let $S'=S\F_q^*$. Since $|S'|\leq \frac{q^2-1}{2}$, $S'$ can be written as the union of at most $\frac{q+1}{2}$ many $\F_q^*$-cosets in $\F_{q^2}^*$. Since $S \subseteq S'$, $A$ remains to be a clique in the Cayley graph $X'=\Cay(\F_{q^2}; S')$. The proposition then follows from \cite[Theorem 1.2]{AY22}, which is a consequence of Theorem~\ref{linear}. 
\end{proof}

\section{Proof of main results}
For convenience, in this section, we use the following non-standard notation: if $A\subseteq \F_{q^2}$ and $0\in A$, we write $$A^{-1}=\{a^{-1}: a \in A\setminus \{0\}\} \cup \{0\}.$$

\subsection{A criterion for subfields}
The goal of this subsection is to prove the following proposition, which plays a crucial role in the proof of Theorem~\ref{thm:main}.

\begin{prop}\label{prop:subfield}
Assume that $A \subseteq \F_{q^2}$ such that $|A|=q$ and $0,1\in A$. If $A$ and $A^{-1}$ are both subspaces in $\F_{q^2}$, then $A$ is the subfield $\F_q$.
\end{prop}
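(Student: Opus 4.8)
The plan is to show that $A$ is in fact a subfield of $\F_{q^2}$; since $A$ would then be a finite field with $|A|=q$ sitting inside $\F_{q^2}$, it must be the unique subfield $\F_q$. As $A$ is an $\F_p$-subspace containing $1$, it already contains the prime field and is closed under addition and under $\F_p$-scalar multiplication, so the entire task is to manufacture multiplicative closure out of the two hypotheses that $A$ and $A^{-1}$ are both subspaces. I would proceed in three stages: first upgrade the hypotheses to the single statement that $A$ is closed under inversion, then use inversion together with additive closure to reach multiplicative closure through a short sequence of algebraic identities.

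First I would show that $A$ is closed under inversion, i.e. $a^{-1}\in A$ for every nonzero $a\in A$; this is the only place where the hypothesis on $A^{-1}$ is used. For nonzero $a\in A$ we have $a^{-1}\in A^{-1}$, and since $1\in A^{-1}$ and $A^{-1}$ is a subspace, $a^{-1}+1=(1+a)/a\in A^{-1}$. Taking reciprocals (valid when $a\neq -1$) gives $a/(1+a)\in A$, and subtracting from $1\in A$ yields $1/(1+a)\in A$. Re-indexing by $c=1+a$ then shows $c^{-1}\in A$ for every nonzero $c\in A$, so in particular $A=A^{-1}$.

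With inversion closure in hand, the remaining work uses only that $A$ is an $\F_p$-subspace closed under inversion. The key computation is that for nonzero $a,b\in A$ with $a\neq b$,
$$\frac{1}{a-b}-\frac{1}{a}=\frac{b}{a^2-ab}\in A,$$
so its reciprocal $(a^2-ab)/b\in A$, and adding $a$ gives $a^2/b\in A$. Specializing $b=1$ shows $A$ is closed under squaring, hence (squaring being injective and $A$ finite) squaring permutes $A$; replacing $b$ by $b^{-1}\in A$ in $a^2/b\in A$ gives $a^2b\in A$, and since $a^2$ ranges over all of $A$ as $a$ does, this is exactly multiplicative closure. Then $A$ is a subring of the field $\F_{q^2}$ containing $1$, hence a subfield of order $q$, forcing $A=\F_q$. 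I expect the only real subtlety to lie in the bookkeeping of the exceptional cases (zero elements, $a=b$, and the boundary value in the re-indexing step) where a reciprocal is undefined and must be checked to hold trivially; the uniform identity above is chosen precisely to avoid the characteristic split between odd and even $p$ that would otherwise arise from the more naive route via $(a+b)^2=a^2+2ab+b^2$.
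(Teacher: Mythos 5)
Your argument is correct and essentially identical to the paper's up to the key relation $a^2/b\in A$ for all $a\in A$ and nonzero $b\in A$ (your identity $\frac{1}{a-b}-\frac{1}{a}=\frac{b}{a^2-ab}$ is the same computation as the paper's Claim, just carried out inside $A$ after first proving $A=A^{-1}$ rather than inside the subspace $A^{-1}$). The gap is in the very last step: you assert that squaring is injective, hence permutes $A$, so that ``$a^2$ ranges over all of $A$ as $a$ does.'' This is false whenever $q$ is odd: $x\mapsto x^2$ is two-to-one on $\F_{q^2}^*$ (already $1$ and $-1$, both of which lie in $A$ since $A$ is an $\F_p$-subspace containing $1$, have the same square), so $\{a^2:a\in A\}$ has only $(q+1)/2$ elements and is a proper subset of $A$. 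Consequently the relation $a^2b\in A$ gives $xb\in A$ only for those $x\in A$ that are squares of elements of $A$, which is not multiplicative closure.

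The repair is precisely the characteristic split you hoped to sidestep, and it is genuinely unavoidable at this point. For $q$ even, squaring is the Frobenius automorphism, so your injectivity claim holds and the argument closes. For $q$ odd, one instead uses the polarization identity $4xy=(x+y)^2-(x-y)^2$: by your relation (with $b=1$) both $(x+y)^2$ and $(x-y)^2$ lie in $A$, and since $A$ is an $\F_p$-subspace with $p$ odd it is closed under subtraction and under division by $4$, giving $xy\in A$; combined with $A=A^{-1}$ this yields $xy^{-1}\in A$ and hence that $A\setminus\{0\}$ is a subgroup. This is exactly how the paper concludes. Everything else in your write-up, including the careful handling of the exceptional cases $a=-1$, $a=b$, and $a=0$, is sound.
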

\begin{proof}
Our key observation is the following claim. 
\begin{claim}\label{claimx^2}
If $x,y\in A$ with $y\neq 0$, then $x^2y^{-1}\in A$. In particular, if $x\in A$, then $x^2\in A$.        
\end{claim}
\begin{poc}
If $x=0$ or $x=y$, then clearly $x^2y^{-1}=x\in A$. Next assume that $x\neq 0$ and $x\neq y$. Then we have $x^2+xy=x(x+y)\neq 0$. Since $x+y\in A$ and $A^{-1}$ is a subspace, we have 
$$
\frac{1}{x}-\frac{1}{x+y}=\frac{y}{x^2+xy}=\frac{1}{x^2y^{-1}+x}\in A^{-1}.
$$
It follows that $x^2y^{-1}+x\in A$ and thus $x^2y^{-1}\in A$.
\end{poc}

It suffices to show that $A\setminus \{0\}$ is subgroup of $\F_{q^2}^*$. Indeed, it then follows that $A$ is a field, which implies that $A=\F_q$ since $A \subseteq \F_{q^2}$ and $|A|=q$. Thus, by the subgroup test, it suffices to show for each $x,y\in A \setminus \{0\}$, we have $xy^{-1} \in A$. 

First, assume that $q$ is even. In this case, note that the map $x \mapsto x^2$ is a field automorphism of $\F_{q^2}$. In particular, Claim~\ref{claimx^2} implies that $A=\{x^2:x\in A\}$. Thus, Claim~\ref{claimx^2} implies that for each $x,y\in A \setminus \{0\}$, we have $xy^{-1} \in A$, as required. 

Next, assume that $q$ is odd. Let $x,y\in A$. Then $x+y,x-y\in A$. By Claim~\ref{claimx^2}, we have $(x+y)^2, (x-y)^2\in A$. It follows that $4xy=(x+y)^2-(x-y)^2\in A$ and thus $xy\in A$. This shows that $A$ is closed under multiplication. Also, Claim~\ref{claimx^2} implies that $A=A^{-1}$. Thus, for each $x,y\in A \setminus \{0\}$, we have $xy^{-1} \in A$, as required. 
\end{proof}

\begin{rem}
After submitting the manuscript, we became aware that stronger versions of Proposition~\ref{prop:subfield} have appeared in \cite[Corollary 1.4]{C13} as well as \cite{M14, M15}. Here we included a short proof of Proposition~\ref{prop:subfield} for the sake of completeness. 
\end{rem}

\subsection{Proof of Theorem~\ref{thm:main}}

Assume that $A$ is a clique of size $q$ in $X$ with $0,1\in A$. For each $x\in A \setminus \{0\}$, we have $x\in S$ and thus
$$
\frac{1}{x}-0=\frac{1}{x}\in S^{-1}.
$$
For $x,y\in A\setminus \{0\}$ with $x\neq y$, we have $x,y,y-x\in S$ and thus
$$
\frac{1}{x}-\frac{1}{y}=\frac{y-x}{xy} \in SS^{-1}S^{-1}.
$$
Note that $S^{-1} \subseteq SS^{-1}S^{-1}$. Thus, $A^{-1}$ is a clique in the Cayley graph $X'=\Cay(\F_{q^2}; T)$ with $T=SS^{-1}S^{-1}$. Now $$TT^{-1}=SS^{-1}S^{-1} (SS^{-1}S^{-1})^{-1}=SSSS^{-1}S^{-1}S^{-1}.$$ Next we consider two cases:

(1) Assume that $|SSSS^{-1}S^{-1}S^{-1}|=|TT^{-1}|\leq \frac{q^2-3}{2}$. Note that $SS^{-1} \subseteq TT^{-1}$. By Proposition~\ref{prop:main} applied to the clique $A$ in the Cayley graph $X$ and to the clique $A^{-1}$ in the Cayley graph $X'$, we deduce that $A$ and $A^{-1}$ are both subspaces in $\F_{q^2}$. Proposition~\ref{prop:subfield} then implies that $A=\F_q$. 

(2) Assume that $|SS^{-1}S^{-1}\F_q^*|=|T\F_q^*|\leq \frac{q^2-1}{2}$. Note that $S\F_q^*\subseteq T\F_q^*$. Applying Proposition~\ref{prop:main2} instead, we deduce that $A=\F_q$ similarly. 

\subsection{Proof of Theorem~\ref{thm:GP}}
Since $g$ is a generator of $\F_{q^2}^*$, $H$ is the subgroup of $\F_{q^2}^*$ of index $d$, and $\F_{q}^*$ is the subgroup of $\F_{q^2}^*$ of index $(q+1)$, it follows that $g^d$ is a generator of $H$ and $g^{q+1}$ is a generator of $\F_{q}^*$. 

(1) Since $d\mid (q+1)$, $\F_q^*$ is a subgroup of $H$ and thus $a\F_q+b$ forms a clique in $X$ for each $a\in S$ and $b\in \F_{q^2}$. Since $S=\bigcup_{j=0}^k g^j H$, we have
$$
SS^{-1}S^{-1}\F_q^*=SS^{-1}S^{-1}=\bigcup_{j=-2k}^{k} g^j H.
$$
Since $d\geq 6k+2$, it follows that
\begin{align*}
\big|SS^{-1}S^{-1}\F_q^*\big|
&=(3k+1)|H|=\frac{(3k+1)(q^2-1)}{d}\leq \frac{q^2-1}{2}.  
\end{align*}

Let $A$ be a clique in $X$ with size $q$. Since $X$ is a Cayley graph, without loss of generality, we can assume that $0\in A$. Since $q\geq 2$, we can take $\alpha \in A\setminus \{0\} \subseteq S$. Then $B=\alpha^{-1}A$ is a clique in $X'=\Cay(\F_{q^2};T)$, where $T=\alpha^{-1}S$. Note that $$|TT^{-1}T^{-1}\F_q^*|=\big|SS^{-1}S^{-1}\F_q^*\big|\leq \frac{q^2-1}{2}$$ and $0,1\in B$. Thus, by applying Theorem~\ref{thm:main} to the clique $B$ in $X'$, we deduce that $B=\F_q$ and thus $A=\alpha \F_q$ for some $\alpha \in S$. We conclude that $\omega(X)=q$ and all maximum cliques are of the form $a\F_q+b$ with $a\in S$ and $b\in \F_{q^2}$. 

(2) Let $d'=\gcd(d,q+1)$. Since $d\nmid (q+1)$, we have $d'\leq d/2$. It follows that $\F_q^* \cap H$ is the subgroup generated by $g^{d(q+1)/d'}$. Thus, for each $0\leq i\leq d/d'-1$, we have $$|\F_q^* \cap g^{id'} H|=\frac{(q^2-1)}{d(q+1)/d'}=\frac{(q-1)d'}{d}.$$ In particular, $\F_q^*$ can be written as the following disjoint union of sets with equal size:
\begin{equation}\label{eq2}
\F_q^*=\bigcup_{i=0}^{d/d'-1} (\F_q^* \cap g^{id'} H).    
\end{equation}

Since $S=\bigcup_{j=0}^k g^j H$, we have
$$
SSSS^{-1}S^{-1}S^{-1}=\bigcup_{j=-3k}^{3k} g^j H.
$$
Since $d\geq 12k+3$, it follows that
\begin{align*}
\big|SSSS^{-1}S^{-1}S^{-1}\big|
&=(6k+1)|H|=\frac{(6k+1)(q^2-1)}{d}\leq \frac{(d-1)(q^2-1)}{2d}\\
&=\frac{q^2-1}{2}-\frac{q^2-1}{2d}\leq \frac{q^2-1}{2}-1=\frac{q^2-3}{2}.   
\end{align*}

Assume otherwise that $\omega(X)\geq q$, then there is a clique $A$ in $X$ with size $q$. Without loss of generality, assume that $0\in A$. Since $q\geq 2$, we can take $\alpha \in A\setminus \{0\}$; say $\alpha \in g^{\ell}H$ for some integer $\ell$. Then $B=\alpha^{-1}A$ is a clique in $X'=\Cay(\F_{q^2}; T)$, where \begin{equation}\label{eq:T}
T=\alpha^{-1}S=g^{-\ell}S=\bigcup_{j=-\ell}^{k-\ell} g^j H.    
\end{equation}
Note that $$TTTT^{-1}T^{-1}T^{-1}=SSSS^{-1}S^{-1}S^{-1}$$
and $0,1\in B$. Thus, by applying Theorem~\ref{thm:main} to the clique $B$ in $X'$, we deduce that $B=\F_q$ and thus $\F_q^*\subseteq T$. By equation~\eqref{eq:T}, $T$ intersects with at most $\lfloor k/d'\rfloor+1$ cosets of $H$ of the form $g^{id'}H$ with $0\leq i\leq d/d'-1$. On the other hand, equation~\eqref{eq2} implies that $\F_q^*$ intersects with exactly $d/d'$ cosets of $H$ of the form $g^{id'}H$ with $0\leq i\leq d/d'-1$. Since $\F_q^* \subseteq T$, it follows that $d/d'\leq \lfloor k/d'\rfloor+1$. Since $d'\leq d/2$, we have $k\geq d'$. However, since $d\geq 12k+3$, we have $$d/d'\geq (12k+3)/d'\geq 12\lfloor k/d'\rfloor+1>\lfloor k/d'\rfloor+1,$$ a contradiction.  We conclude that $\omega(X)\leq q-1$.

\section*{Acknowledgments}
The author thanks Esen Aksoy, Shamil Asgarli, Bence Csajb\'ok, Raghu Tej Pantangi, Ilya Shkredov, J\'ozsef Solymosi, and Ethan White for helpful discussions. The author also thanks the anonymous referees for their valuable comments and suggestions. The research of the author was supported in part by an NSERC fellowship.

\bibliographystyle{abbrv}
\bibliography{main}

\begin{thebibliography}{10}

\bibitem{AY22}
S.~Asgarli and C.~H. Yip.
\newblock Van {L}int--{M}ac{W}illiams' conjecture and maximum cliques in {C}ayley graphs over finite fields.
\newblock {\em J. Combin. Theory Ser. A}, 192:Paper No. 105667, 23, 2022.

\bibitem{Ball03}
S.~Ball.
\newblock The number of directions determined by a function over a finite field.
\newblock {\em J. Combin. Theory Ser. A}, 104(2):341--350, 2003.

\bibitem{BEW}
B.~C. Berndt, R.~J. Evans, and K.~S. Williams.
\newblock {\em Gauss and {J}acobi sums}.
\newblock Canadian Mathematical Society Series of Monographs and Advanced Texts. John Wiley \& Sons, Inc., New York, 1998.
\newblock A Wiley-Interscience Publication.

\bibitem{Blo84}
A.~Blokhuis.
\newblock On subsets of {${\rm GF}(q^2)$} with square differences.
\newblock {\em Nederl. Akad. Wetensch. Indag. Math.}, 46(4):369--372, 1984.

\bibitem{BBBSS}
A.~Blokhuis, S.~Ball, A.~E. Brouwer, L.~Storme, and T.~Sz{\H{o}}nyi.
\newblock On the number of slopes of the graph of a function defined on a finite field.
\newblock {\em J. Combin. Theory Ser. A}, 86(1):187--196, 1999.

\bibitem{BF91}
A.~A. Bruen and J.~C. Fisher.
\newblock The {J}amison method in {G}alois geometries.
\newblock {\em Des. Codes Cryptogr.}, 1(3):199--205, 1991.

\bibitem{C13}
B.~Csajb\'ok.
\newblock Linear subspaces of finite fields with large inverse-closed subsets.
\newblock {\em Finite Fields Appl.}, 19:55--66, 2013.

\bibitem{DSW}
D.~Di~Benedetto, J.~Solymosi, and E.~P. White.
\newblock On the directions determined by a {C}artesian product in an affine {G}alois plane.
\newblock {\em Combinatorica}, 41(6):755--763, 2021.

\bibitem{E81}
R.~J. Evans.
\newblock Pure {G}auss sums over finite fields.
\newblock {\em Mathematika}, 28(2):239--248, 1981.

\bibitem{GM15}
C.~Godsil and K.~Meagher.
\newblock {\em Erd{\H{o}}s-{K}o-{R}ado theorems: algebraic approaches}, volume 149 of {\em Cambridge Studies in Advanced Mathematics}.
\newblock Cambridge University Press, Cambridge, 2016.

\bibitem{GY24}
S.~Goryainov and C.~H. Yip.
\newblock Extremal {P}eisert-type graphs without the strict-{EKR} property.
\newblock {\em J. Combin. Theory Ser. A}, 206:Paper No. 105887, 42, 2024.

\bibitem{GR07}
B.~Green and I.~Z. Ruzsa.
\newblock Freiman's theorem in an arbitrary abelian group.
\newblock {\em J. Lond. Math. Soc. (2)}, 75(1):163--175, 2007.

\bibitem{HP}
B.~Hanson and G.~Petridis.
\newblock Refined estimates concerning sumsets contained in the roots of unity.
\newblock {\em Proc. Lond. Math. Soc. (3)}, 122(3):353--358, 2021.

\bibitem{Katz}
N.~M. Katz.
\newblock An estimate for character sums.
\newblock {\em J. Amer. Math. Soc.}, 2(2):197--200, 1989.

\bibitem{KYY24b}
S.~{Kim}, C.~H. {Yip}, and S.~{Yoo}.
\newblock {{P}aley-like quasi-random graphs arising from polynomials}.
\newblock arXiv:2405.09319, 2024.

\bibitem{M14}
S.~Mattarei.
\newblock A property of the inverse of a subspace of a finite field.
\newblock {\em Finite Fields Appl.}, 29:268--274, 2014.

\bibitem{M15}
S.~Mattarei.
\newblock Inversion and subspaces of a finite field.
\newblock {\em Israel J. Math.}, 206(1):327--351, 2015.

\bibitem{NM}
N.~Mullin.
\newblock Self-complementary arc-transitive graphs and their imposters, 2009.
\newblock Master's thesis, University of Waterloo.

\bibitem{P12}
G.~Petridis.
\newblock New proofs of {P}l\"unnecke-type estimates for product sets in groups.
\newblock {\em Combinatorica}, 32(6):721--733, 2012.

\bibitem{LR73}
L.~R{\'{e}}dei.
\newblock {\em Lacunary polynomials over finite fields}.
\newblock North-Holland Publishing Co., Amsterdam-London; American Elsevier Publishing Co., Inc., New York, 1973.
\newblock Translated from the German by I. F\"{o}ldes.

\bibitem{Reis}
L.~Reis.
\newblock Character sums over affine spaces and applications.
\newblock {\em Finite Fields Appl.}, 83:Paper No. 102067, 2022.

\bibitem{S20}
I.~D. Shkredov.
\newblock Any small multiplicative subgroup is not a sumset.
\newblock {\em Finite Fields Appl.}, 63:101645, 15, 2020.

\bibitem{S99}
T.~Sz\H{o}nyi.
\newblock Around {R}\'{e}dei's theorem.
\newblock {\em Discrete Math.}, 208/209:557--575, 1999.

\bibitem{Szi99}
P.~Sziklai.
\newblock On subsets of {${\rm GF}(q^2)$} with {$d$}th power differences.
\newblock {\em Discrete Math.}, 208/209:547--555, 1999.

\bibitem{vLM78}
J.~H. van Lint and F.~J. MacWilliams.
\newblock Generalized quadratic residue codes.
\newblock {\em IEEE Trans. Inform. Theory}, 24(6):730--737, 1978.

\bibitem{W48}
A.~Weil.
\newblock On some exponential sums.
\newblock {\em Proc. Nat. Acad. Sci. U.S.A.}, 34:204--207, 1948.

\bibitem{Y22}
C.~H. Yip.
\newblock Gauss sums and the maximum cliques in generalized {P}aley graphs of square order.
\newblock {\em Funct. Approx. Comment. Math.}, 66(1):119--138, 2022.

\bibitem{Yip1}
C.~H. Yip.
\newblock On the clique number of {P}aley graphs of prime power order.
\newblock {\em Finite Fields Appl.}, 77:Paper No. 101930, 2022.

\bibitem{Y24}
C.~H. Yip.
\newblock {Erd\H os}-{K}o-{R}ado theorem in {P}eisert-type graphs.
\newblock {\em Canad. Math. Bull.}, 67(1):176--187, 2024.

\bibitem{Y25+}
C.~H. Yip.
\newblock Exact values and improved bounds on the clique number of cyclotomic graphs.
\newblock {\em Des. Codes Cryptogr.}, 93(12):5131--5142, 2025.

\end{thebibliography}

\end{document}